\theoremstyle{plain}
\newtheorem{theorem}{Theorem}[section]
\newtheorem{lemma}[theorem]{Lemma}
\newtheorem{proposition}[theorem]{Proposition}
\newtheorem{corollary}[theorem]{Corollary}
\theoremstyle{definition}
\newtheorem{definition}[theorem]{Definition}
\newtheorem{remark}[theorem]{Remark}
\newtheorem{example}[theorem]{Example}
\newcommand{\Aff}{\mathbf{A}} 
\newcommand{\ZZ}{\mathbf{Z}} 
\newcommand{\CC}{\mathbf{C}} 
\newcommand{\suchthat}{\;\vline\;} 
\newcommand{\iso}{\cong}      
\newcommand{\PP}{\mathbf{P}}  
\newcommand{\sheaf}[1]{\mathcal{#1}} 
\newcommand{\stack}[1]{\mathcal{#1}} 
\newcommand{\OO}{\sheaf{O}}   
\newcommand{\dual}[1]{{#1}^\vee}
\newcommand{\res}[2]{\left.#1\right|_{#2}} 
\DeclareMathOperator{\hcf}{hcf} 
\DeclareMathOperator{\SL}{SL} 
\DeclareMathOperator{\Proj}{Proj} 
\DeclareMathOperator{\Spec}{Spec} 
\newcommand{\PProj}{\mathcal{P}\mathit{roj}} 
\newcommand{\Gm}{{\mathbf{G}_m}} 
\DeclareMathOperator{\Stab}{Stab} 
\DeclareMathOperator{\Aut}{Aut} 
\newcommand{\runity}{\boldsymbol{\mu}} 
\newcommand{\rig}[1]{{#1}^{\mathrm{rig}}} 
\newcommand{\can}[1]{{#1}^{\mathrm{can}}} 
\newcommand{\sstab}[1]{\ensuremath{{#1}^{\mathrm{ss}}}} 
\newcommand{\setmin}{\smallsetminus}
\numberwithin{equation}{section}
\author{Martin G. Gulbrandsen}
\email{gulbr@kth.se}
\begin{document}
\address{Royal Institute of Technology\\Stockholm, Sweden}
\thanks{The author gratefully acknowledges support from Institut
Mittag-Leffler (Djursholm, Sweden), where part of this work was done.}

\title[Stack structures on GIT quotients]{Stack structures on GIT quotients parametrizing hypersurfaces}

\keywords{GIT quotient, invariant theory, stack quotient, binary forms}
\subjclass[2000]{Primary: 14L30; Secondary: 14L24, 14A20}

\begin{abstract}
We suggest to endow Mumford's GIT quotient scheme with a stack
structure, by replacing $\Proj(-)$ of the invariant ring with
its stack theoretic analogue. We analyse the stacks resulting in
this way from classically studied invariant rings, and in
particular for binary forms of low degree. Our viewpoint is that
the stack structure carries interesting geometric information
that is intrinsically present in the invariant ring, but lost
when passing to its $\Proj(-)$.
\end{abstract}
\maketitle

\section{Introduction}

Let $G$ be a reductive group acting on $\PP^N$ via a linear
representation, and let $Y\subseteq\PP^N$ be a $G$-invariant
subscheme with homogeneous coordinate ring $S$. Thus we consider
a linearized action of $G$ on $Y=\Proj(S)$. Let $R=S^G$ be the
invariant ring. According to Mumford's geometric invariant
theory, the semistable locus $\sstab{Y}$ admits a good quotient,
which is the projective scheme
\begin{equation*}
X = \Proj(R).
\end{equation*}
In classical invariant theory, a central question was to find
explicit presentations for the invariant ring $R$ in specific
examples. Such presentations give explicit equations for the GIT
quotient scheme $X$.

Let $0\in\Spec(R)$ be the vertex, defined by the ideal $R_+$
generated by elements of strictly positive degree.
Then $\Gm$ acts on the complement
$\Spec(R)\setmin \{0\}$, and the quotient scheme is
$\Proj(R)$.
The $\Gm$-action is
free if $R$ is generated in degree $1$, but not in general.  The
invariant rings we will consider are not generated in degree
$1$, and thus it is natural to consider also the stack quotient
$\stack{X}$ of the same action of $\Gm$ on $\Spec(R)\setmin
\{0\}$. This stack will be called the \emph{stacky GIT
quotient}. 

Thus the stacky GIT quotient $\stack{X}$ is a Deligne-Mumford
stack with the usual projective GIT quotient scheme $X$ as underlying
coarse space.
In the language of
Alper's stack theoretic treatment of GIT \cite{alper2008}, the scheme
$X$ is a ``good moduli space'' for the quotient stack $[\sstab{Y}/G]$,
and as the natural map $\sstab{Y}\to \stack{X}$ is $G$-invariant (in
the $2$-categorical sense), the quotient map from $[\sstab{Y}/G]$ to
its good moduli space factors through the stacky GIT quotient:
\begin{equation*}
[\sstab{Y}/G] \to \stack{X} \to X
\end{equation*}
Thus the stacky GIT quotient sits somewhere between the full
stack quotient and the GIT quotient scheme.
It has richer structure
than the latter, but is simpler than the full stack quotient, which
is not Deligne-Mumford in general.
On the other hand it is unclear
exactly what the stacky enrichment of the usual GIT quotient
captures, and we do not know of any sense in which it is a
\emph{quotient}, for instance in terms of a universal property.

Our aim is to analyse the relationship between the stacky GIT
quotient and the GIT quotient scheme in the examples studied in
classical invariant theory, where explicit presentations for the
invariant ring is known. Thus we are concerned with the action
of $G=\SL(n+1)$ by substitution on the projective space $Y$ of
degree $d$ hypersurfaces in $\PP^n$ for $n$ and $d$ small. More
precisely, we consider the actions of $\SL(2)$ on binary
quartics, quintics and sextics; of $\SL(3)$ on cubic curves; and
of $\SL(4)$ on cubic surfaces. 

The invariant ring of binary quartics and cubic curves are just
weighted polynomial rings in two variables. The invariant rings of
binary quintics, binary sextics, and cubic surfaces are more interesting,
and admits a special presentation (see
Equation \ref{eq:typicalR}) involving a certain polynomial $F$. We find that for
a ring $R$ of this form,
the corresponding stack $\stack{X}$ can essentially (precisely, up
to an essentially trivial $\runity_r$-gerbe) be reconstructed from
its coarse space $X=\Proj(R)$ together with the divisor $Z(F)$
on it. On the other hand, this divisor cannot be described in terms
of the intrinsic geometry of the GIT quotient \emph{scheme}. Thus, for invariant rings of the
type \eqref{eq:typicalR}, the divisor $Z(F)$ is
the essential piece of information that is intrinsically present in
the ring, and remembered by the stacky GIT quotient,
but ``forgotten'' by the GIT quotient scheme.

Moreover, in the case of $\SL(2)$ acting on binary forms of degree up
to $6$, we consider the classification of binary forms according to
symmetry, i.e.~their stabilizer groups in $\SL(2)$: Binary forms
with prescribed symmetry group correspond to locally closed loci
in the GIT quotient scheme $X$. We observe that the loci of binary
forms with extra symmetries (i.e.~larger symmetry group than the
generic one) occur as
\begin{enumerate}
\item singular points of $X$,
\item the divisor $Z(F)$,
\item singular points of $Z(F)$, or
\item singular points of the singular locus of $Z(F)$.
\end{enumerate}
As the stacky GIT quotient $\stack{X}$ remembers the divisor $Z(F)$, this enables
us to describe the loci of binary forms with extra symmetries in terms
of the intrinsic geometry of $\stack{X}$. We remark that the same statement
trivially holds for the stack quotient $[\sstab{Y}/G]$, but is not obvious
for the stacky GIT quotient, since the automorphism groups of its points
do not coincide with the stabilizer groups for the $\SL(2)$ action.

The approach in this text is entirely dependant on the invariant ring having the special
presentation \eqref{eq:typicalR}. This structure is
very special, although it is typical for the invariant rings determined
explicitly by the invariant theorists of the 19th century. Already for binary
forms of degree larger than $6$, the present approach does not apply: The locus
of binary forms with extra symmetries has codimension at least $2$ as soon as the
degree exceeds $6$, and hence does not contain a divisor $Z(F)$.
We remark that the invariant ring for binary forms of degree $8$ has been
explicitly described by Shioda \cite{shioda67}, and its structure
is indeed more complicated than \eqref{eq:typicalR}. Beyond those examples treated here,
the only cases known to the author that can 
be studied with similar methods are the actions of
finite subgroups of $\SL(2)$ on $\PP^1$ (with the natural
linearization given by the action of $\SL(2)$ on $\Aff^2$), whose invariant rings
have a structure close to that of \eqref{eq:typicalR}
\cite[Section~4.5]{springer77}.

The text roughly consists of two parts: In Sections
\ref{sec:root}, \ref{sec:rig} and \ref{sec:stackygit} we recall
standard stack theoretic notions (the root construction,
rigidification, the canonical smooth stack), and investigate
their meaning for the stacks arising from graded rings of the form \eqref{eq:typicalR}.
In Sections \ref{sec:sym},
\ref{sec:moduli} and \ref{sec:cubics} we analyse the stacky GIT
quotients corresponding
to the classically studied actions of $\SL(n+1)$ on hypersurfaces
in $\PP^n$. The material in this second part has a classic taste,
and is undoubtedly well known. I claim originality only for the
interpretation of these results in terms of the stacky GIT
quotient.
It should also be remarked that the stacky GIT quotient
for binary sextics, and its ``memory'' of $Z(F)$, 
has been considered by Hassett \cite[Section~3.1]{hassett2005}.

For an overview of classical invariant theory, and for more
detailed references to original works than is given here, the
reader is referred to the book by Dolgachev \cite{dolgachev2003},
which has been very useful in preparing this text. I learnt the
right language (the root construction, etc.)~for these
investigations from a talk on toric stacks by B.~Fantechi at the
Institut Mittag-Leffler in May, 2007.

\section{Notation}

We work over an algebraically closed field $k$ of characteristic
zero.
Following Fantechi et.~al.~\cite{FMN2007}, we define a \emph{DM stack} to be a separated
Deligne-Mumford stack.

Let $R = \bigoplus_{d\ge 0} R_d$ be a nonnegatively graded
$k$-algebra with $R_0=k$, and let $R_+$ be the maximal ideal generated by
elements of strictly positive degree. Thus $\Spec(R)$ is a cone,
with vertex $0\in \Spec(R)$ defined by $R_+$.
We write
\begin{equation}\label{eq:stackyproj}
\PProj(R) = [(\Spec(R)\setmin \{0\})) / \Gm]
\end{equation}
for the stack quotient by the natural action of $\Gm$
associated to the grading. The coarse space of $\stack{X}=\PProj(R)$
is the usual scheme $X=\Proj(R)$.
Note that line bundles
on $\stack{X}$ can be identified with $\Gm$-linearized line
bundles on $\Spec(R)\setmin \{0\}$. 
Thus, the graded $R$-module $R(n)$ gives rise to a line bundle
$\OO_{\stack{X}}(n)$ on $\stack{X}$, although the sheaf $\OO_X(n)$
on $X$ may fail to be locally free.

\begin{example}
Let $d_1,\dots,d_n$ be positive integers, and let $k[t_1,\dots,t_n]$ denote
the weighted polynomial ring in which $t_i$ has degree $d_i$. Then the
\emph{weighted projective stack} with the given weights is defined as
\begin{equation*}
\stack{P}(d_1,\dots,d_n) = \PProj(k[t_1,\dots,t_n])
\end{equation*}
and its coarse space is the usual weighted projective space
\begin{equation*}
\PP(d_1,\dots,d_n) = \Proj(k[t_1,\dots,t_n]).
\end{equation*}
\end{example}

\begin{definition}
Let $G$ be a reductive group acting on a projective scheme
$Y\subset\PP^N$ via a linear representation. Let $S$ be the
homogeneous coordinate ring of $Y$. Then the stack
\begin{equation*}
\stack{X} = \PProj(S^G)
\end{equation*}
is the \emph{stacky GIT
quotient} of the linearized action of $G$ on $Y$.
\end{definition}

If $f$ is a homogeneous element in $R=S^G$ of degree $r\ne 0$, the
ring $R/(f-1)$ is $\ZZ/(r)$-graded, and there is a corresponding
action of the cyclic group $\runity_r = \Spec k[t]/(t^r-1)$ on its
spectrum. The stack quotient $[\Spec(R/(f-1))/\runity_r]$ is an open
substack of $\PProj(R)$, and for $f$ running through a generator
set of $R$, these open substacks form an open cover. Thus the
stacky GIT quotient is a
DM stack
with cyclic
automorphism groups.

\section{Root stacks}\label{sec:root}

We fix a
DM stack
$\stack{X}$, a line bundle $\sheaf{L}$ on $\stack{X}$
with a global section $s$, and a natural number $r$. Associated to
these data, there is a canonically defined stack
\begin{equation*}
\pi\colon \stack{X}[\sqrt[r]{s}] \to \stack{X}
\end{equation*}
over $\stack{X}$, called
the $r$'th root along $s$.

Briefly, $\stack{X}[\sqrt[r]{s}]$ is obtained from $\stack{X}$ by
adding $\runity_r$ to the automorphism groups along the vanishing
locus of $s$, enabling one to extract an $r$'th root of $\pi^*(s)$.
Away from the vanishing locus of $s$, the map $\pi$ is an
isomorphism.

More precisely, an object of $\stack{X}[\sqrt[r]{s}]$ over a scheme $T$
consists of a map $f\colon T\to\stack{X}$, together with a line
bundle $\sheaf{M}$ on $T$ with a global section $t$, and an
isomorphism
\begin{equation*}
\sheaf{M}^r\iso f^*(\sheaf{L})
\end{equation*}
sending $t^r$ to $s$. The foundations of this construction can be
found in a paper by Cadman \cite{cadman2007}.
In particular, Cadman
shows that the root construction applied to a Deligne-Mumford stack
is again Deligne-Mumford.

\begin{example}
Let $X=\Spec(R)$ be an affine scheme, and let $s\in R$, considered as
a section of the trivial line bundle. Then the $r$'th root stack
along $s$ is the stack quotient
\begin{equation*}
X[\sqrt[r]{s}] = [\Spec \left(R[t]/(t^r-s)\right) /\runity_r]
\end{equation*}
where the $\runity_r$-action corresponds to the canonical
$\ZZ/(r)$-grading of $R[t]/(t^r-s)$.
\end{example}

Our aim is to establish a graded analogue of this example. To state
the result, we introduce the following notation: If $R=\bigoplus_{d\ge
0}R_d$ is a graded ring and $n$ is a natural number, let
$R^{(1/n)}$ be the same ring with grading defined by
declaring that $d\in R_n$ has degree $dn$ in $R^{(1/n)}$. Note
that $R^{(1/n)}_d=0$ unless $n$ divides $d$. In the following we
use the notation $\PProj(R)$ for the stack \eqref{eq:stackyproj}.

\begin{lemma}\label{lem:root}
Let $R=\bigoplus_{n\ge 0} R_n$ be a graded $k$-algebra with
$R_0=k$, and let $\stack{X}=\PProj(R)$.
Let $s\in R_n$ be a homogeneous element, considered as a global
section of $\OO_{\stack{X}}(n)$. Let $r$ be a natural number, and
assume that $r$ and $n$ have no common factors. Then the $r$'th root
stack of $\stack{X}$ along $s$ is
\begin{equation*}
\stack{X}[\sqrt[r]{s}] = \PProj(S)
\qquad\text{where}\qquad
S = R^{(1/n)}[t]/(t^r-s)
\end{equation*}
with grading defined by letting $t$ have degree $n$.
\end{lemma}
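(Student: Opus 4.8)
The plan is to verify the claimed equality by covering $\stack{X}[\sqrt[r]{s}]$ by affine root stacks of the type described in the second Example above, and matching these with the standard affine cover of $\PProj(S)$ by the open substacks $[\Spec(S/(g-1))/\runity_{\deg g}]$ attached to homogeneous generators $g$. First I would fix notation: since $r$ and $n$ are coprime, the grading on $S = R^{(1/n)}[t]/(t^r - s)$ (with $t$ in degree $n$ and each $a \in R_m$ placed in degree $mn$) makes $\Spec(S)\setminus\{0\}$ a $\Gm$-space whose stack quotient is $\PProj(S)$ by definition. I want to show this stack represents the same functor as $\stack{X}[\sqrt[r]{s}]$, i.e.\ that giving a map $T \to \PProj(S)$ is the same as giving a map $f\colon T \to \stack{X}$ together with a line bundle $\sheaf{M}$ on $T$, a section $t$ of it, and an isomorphism $\sheaf{M}^{\otimes r} \iso f^*\OO_{\stack{X}}(n)$ carrying $t^r$ to $s$.

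The key step is the translation between graded data and line-bundle data. A map $T \to \PProj(S)$ is a $\Gm$-torsor $P \to T$ together with a $\Gm$-equivariant map $P \to \Spec(S)\setminus\{0\}$; equivalently, by the standard dictionary, it is a collection of line bundles $\sheaf{L}_d$ on $T$ (the graded pieces, with $\sheaf{L}_d = \sheaf{L}_1^{\otimes d}$ when $S$ were generated in degree one, but here one must be more careful) together with sections realizing the ring structure. Concretely: because $R^{(1/n)}$ sits inside $S$ in degrees divisible by $n$, a map $T \to \PProj(R^{(1/n)})$ is the same as a map $T \to \PProj(R) = \stack{X}$ — here I would either cite or quickly check that $\PProj(R^{(1/n)}) \iso \PProj(R)$, which holds because regrading does not change the $\Gm$-quotient up to the automorphism $\Gm \to \Gm$, $\lambda \mapsto \lambda^n$, combined with coprimality of $n$ with $r$ to control the resulting $\runity_n$. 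The generator $t$ of degree $n$ then corresponds precisely to a line bundle $\sheaf{M}$ on $T$ (the pullback of $\OO_{\PProj(S)}(n)$, which is a genuine line bundle since it is pulled back from a quotient stack where degree-$n$ pieces are invertible) with a section, and the relation $t^r = s$ translates into the isomorphism $\sheaf{M}^{\otimes r} \iso f^*\OO_{\stack{X}}(n)$ matching $t^r \mapsto s$. This is exactly the data defining an object of $\stack{X}[\sqrt[r]{s}]$.

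To make this rigorous I would work locally: choose homogeneous generators $f_1, \dots, f_k$ of $R$ with $\deg f_i = e_i$. On the open set where $s$ is invertible, $\stack{X}[\sqrt[r]{s}] \to \stack{X}$ is an isomorphism, and on the $\PProj(S)$ side inverting $s$ forces $t$ to be a unit, so $S[s^{-1}]_0 \iso R^{(1/n)}[s^{-1}]_0 \iso R[s^{-1}]_0$ (using $\gcd(r,n)=1$ to see that the extra $r$-th root contributes nothing to the degree-zero part), giving the same open substack. Elsewhere, on the chart attached to a generator $f_i$, one computes $[\Spec(S/(f_i - 1))/\runity_{e_i n}]$ and identifies it, via the Example on affine root stacks, with the $r$-th root of $[\Spec(R/(f_i-1))/\runity_{e_i}]$ along the image of $s$ — here the coprimality $\gcd(r,n)=1$ is what lets one disentangle the $\runity_{e_i n}$ into a product $\runity_{e_i} \times \runity_n$-type decomposition after suitable reindexing, so that the $\runity_r$-part genuinely implements root extraction and the rest reproduces $\stack{X}$. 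Checking that these local identifications glue is then formal.

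I expect the main obstacle to be the bookkeeping in the regrading step: showing cleanly that $\PProj(R^{(1/n)}) \iso \stack{X}$ and, more delicately, that adjoining $t$ with $t^r = s$ in the regraded ring produces exactly the root stack and not a further gerbe — the hypothesis $\gcd(r,n)=1$ is essential here and must be used at precisely the point where one compares the cyclic automorphism groups $\runity_{e_i n}$ appearing on the $\PProj(S)$ side with the product $\runity_{e_i} \times \runity_r$ expected from "root stack of $\stack{X}$". Getting the isomorphism of automorphism groups right, compatibly over the cover, is the real content.
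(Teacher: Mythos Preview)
Your chart-by-chart strategy is genuinely different from the paper's proof. The paper works globally: it realizes $\Spec(S)\setminus\{0\}$ as a closed subscheme of $(\Spec(R)\setminus\{0\})\times\Aff^1$ with the $\Gm$-action $(x,a)\mapsto(\sigma_X(\xi^r,x),\xi^n a)$, and then constructs mutually quasi-inverse functors between $[\,(\Spec S\setminus\{0\})/\Gm\,]$ and $\stack{X}[\sqrt[r]{s}]$ directly at the level of $T$-points, i.e.\ torsors with equivariant maps. No charts, no gluing. The coprimality hypothesis enters at one clean spot: when reconstructing a $\Gm$-torsor $Q$ from root-stack data $(P,L,v)$, one needs the $n$-th power map to induce an automorphism of $\runity_r$.

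Your local approach can in principle be carried out, but two points in the proposal are actually wrong and would derail it. First, $\PProj(R^{(1/n)})\iso\PProj(R)$ is false: the map $\lambda\mapsto\lambda^n$ on $\Gm$ is not an automorphism but has kernel $\runity_n$, so $\PProj(R^{(1/n)})$ is a $\runity_n$-gerbe over $\PProj(R)$. (In fact the printed statement has a typo --- the proof's action formula and the application in Theorem~\ref{thm} both show the intended ring is $R^{(1/r)}$, not $R^{(1/n)}$; with that correction homogeneity of $t^r-s$ is restored and the extra $\runity_r$ introduced by the regrading is exactly what the root stack should contribute along $Z(s)$.) Second, on the chart $D_+(f_i)$ the group is then $\runity_{e_ir}$, and the structure you need is the exact sequence
\[
1\to\runity_r\to\runity_{e_ir}\xrightarrow{\ \zeta\mapsto\zeta^r\ }\runity_{e_i}\to 1,
\]
not a product decomposition involving $\runity_n$. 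The kernel $\runity_r$ acts trivially on $R/(f_i-1)$ and on $t$ by $\zeta\cdot t=\zeta^n t$; coprimality of $r$ and $n$ is precisely what makes this the standard $\runity_r$-action up to an automorphism of $\runity_r$, so the inner quotient is the affine root stack of $\Spec(R/(f_i-1))$ along $\bar s$, and the residual $\runity_{e_i}$-quotient then recovers the chart of $\stack{X}[\sqrt[r]{s}]$. Once $r$ and $n$ are untangled in this way, the gluing really is formal --- but as written your group-theoretic bookkeeping mixes them up.
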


The lemma fails without the condition that $r$ and $n$ have no
common factors, as the
following example shows.

\begin{example}
Let $R=k[x_0,\dots,x_n]$ where the $x_i$'s have degree $1$. Then
$\stack{X}$ is the scheme $\PP^n$. Consider the square root
stack along a quadratic hypersurface, so let $s\in R$ have degree
$2$. A point on the square root stack has automorphism group
$\runity_2$ if it belongs to the vanishing locus of $s$; otherwise
its automorphism group is trivial.
On the other hand
\begin{equation*}
S = k[x_0,\dots,x_n,t]/(t^2-s).
\end{equation*}
The grading defined in the lemma is such that the $x_i$'s and $t$
all have degree $2$. But then $\PProj(S)$ has
$\runity_2$ as automorphism group everywhere, and is thus not
the square root stack along $s$. The only other sensible grading
on $S$ is that in which $x_i$ and $t$ have degree
$1$, but then $\PProj(S)$ would be a scheme, and we still do
not get the square root stack.
\end{example}

\begin{proof}[Proof of Lemma \ref{lem:root}]
Let $X=\Spec(R)\setmin \{0\}$ and $Y=\Spec(S)\setmin \{0\}$, equipped
with $\Gm$-actions
\begin{align*}
\sigma_X&\colon \Gm\times X \to X\\
\sigma_Y&\colon \Gm\times Y \to Y.
\end{align*}
Viewing $Y$ as the subscheme of $X\times\Aff^1$ defined by
$t^r=s$, the action $\sigma_Y$ is the restriction of the
action on $X\times\Aff^1$ given on $T$-valued points by
\begin{equation}\label{eq:action}
(x,a) \mapsto (\sigma_X(\xi^r, x), \xi^na)
\end{equation}
for $(x,a)\in X(T)\times\Aff^1(T)$ and $\xi\in\Gm(T)$.

The claim is that $\stack{Y}=[Y/\Gm]$ is the $r$'th root stack of
$\stack{X}=[X/\Gm]$ along $s$. We will show how to map objects in
$\stack{X}[\sqrt[r]{s}]$ over a scheme $T$ to objects in
$\stack{Y}$ over $T$ and conversely, leaving out the straight
forward verification that these maps are quasi-inverse functors
in a natural way.
With reference to the diagram
\begin{equation}\label{dia:torsors}
\begin{diagram}[height=1.5em]
Q & \rTo^{(g,u)} & &  X\times\Aff^1 \\
  & \rdTo^{\pi}  & & & \rdTo \\
\dTo^q     &  & P & \rTo^f & & X \\
& \ldTo_p \\
T
\end{diagram}
\end{equation}
the objects in question are given by the following data:
\begin{enumerate}
\item
An object in $\stack{Y}$ over $S$ is a $\Gm$-torsor $q\colon Q\to
T$ together with a $\Gm$-equivariant map $Q\to Y$. Viewing $Y$ as
a subscheme of $X\times\Aff^1$, the latter becomes a pair $(g,u)$
as in the upper part of diagram \eqref{dia:torsors}, which is
equivariant with respect to the action \eqref{eq:action} on the
target.
\item
An object in $\stack{X}[\sqrt[r]{s}]$ over $T$ is a $\Gm$-torsor
$p\colon P\to T$ together with a $\Gm$-equivariant map $f$ as in
the lower part of diagram \eqref{dia:torsors}, a $\Gm$-linearized
line bundle $L$ over $P$ with an equivariant section $v\in
\Gamma^\Gm(P,L)$ and a $\Gm$-equivariant isomorphism
\begin{equation*}
L^r \iso P\times\Aff^1_{(n)} \quad
(=f^*(X\times\Aff^1_{(n)}))
\end{equation*}
which identifies $v^r$ with $f^*(s)$. Here we write $\Aff^1_{(n)}$
for the affine line equipped with the $\Gm$-action of weight $n$.
\end{enumerate}

Given data (1), let $P=Q/\runity_r$ and let $p\colon P\to T$ be the map
induced by $q$. This is a $\Gm$-torsor with respect to the
induced action of $\Gm/\runity_r\iso\Gm$. Moreover, $g$ induces
an equivariant map $f$ making diagram \eqref{dia:torsors} commute. On
$P$ there is the $\Gm$-linearized line bundle
\begin{equation*}
L = (Q\times\Aff^1_{(n)}) / \runity_r
\end{equation*}
with a section $v\in \Gamma^\Gm(P, L)$ induced by the unity
section of $Q\times\Aff^1_{(n)}$, and a canonical trivialization
\begin{equation*}
L^r \iso (Q\times\Aff^1_{(n)})^r/\runity_r
\iso P\times\Aff^1_{(n)}.
\end{equation*}
This defines data as in (2).

Conversely, let data (2) be given. The line bundle $L$ with the
trivialization of $L^r$ gives rise to a
$\runity_r$-torsor $\pi\colon Q\to P$, defined as follows:
Identifying $(\dual{L})^r$ with the trivial line
bundle, we let $Q\subset \dual{L}$ be the $r$'th roots of unity
in each fibre. This is clearly a $\runity_r$-torsor under the
action of multiplication in the fibres. Now we define a new
$\runity_r$-action on $Q$ by letting $\xi\in\runity_r$ act by
multiplication with $\xi^n$ in the fibres. Since $r$ and $n$ are
relatively prime, the $n$'th power endomorphism on $\Gm$ induces
an automorphism on $\runity_r$, so $Q$ is a $\runity_r$-torsor also
under this new action. Moreover, it extends to a
$\Gm$-action as follows: Let $\xi\in\Gm$ act on $\dual{L}$ by
composing the contragradient action of $\xi^r$ (using the given $\Gm$-action
on $L$) with multiplication by $\xi^n$ in the fibres:
\begin{equation*}
\begin{diagram}[height=4ex,width=6em,tight]
\Gm \times \dual{L} & \rTo^{(n,r)\times 1_{\dual{L}}} &
\Gm\times\Gm\times\dual{L} \\
& \rdTo(2,4) & \dTo_{1_\Gm} \times(\text{contragrad.})\\
& & \Gm\times\dual{L}\\
& & \dTo_{(\text{scalar mult.})}\\
& & \dual{L}
\end{diagram}
\end{equation*}
Then $Q\subset\dual{L}$ is $\Gm$-invariant, and the induced
action extends the $\runity_r$-action defined above. The
given $\Gm$-action on $P$ agrees with the induced action of
$\Gm/\runity_r\iso\Gm$ on $Q/\runity_r\iso P$, and it follows
that $Q$ is a $\Gm$-torsor over $T$. We now let $g=f\circ\pi$ and
let $u$ be the restriction of
\begin{equation*}
\dual{v}\colon\dual{L}\to \Aff^1
\end{equation*}
to $Q$. This defines data as in (1).
\end{proof}

\section{Rigidification}\label{sec:rig}

Intuitively, the rigidification of a given stack is ``the same stack'' with
the general automorphism group removed.

\begin{definition}\label{def:rig}
The \emph{rigidification} of an irreducible
DM stack
$\stack{X}$
is a dominant map
\begin{equation*}
f\colon \stack{X} \to \rig{\stack{X}}
\end{equation*}
to another
DM stack
$\rig{\stack{X}}$, such that the automorphism group $\Aut(x)$ of a general point $x\in\rig{\stack{X}}$ is trivial, and such that $f$ is universal with this property.
\end{definition}

We spell out the meaning of universality: For every dominant map $g\colon
\stack{X} \to \stack{Y}$, to a
DM stack
$\stack{Y}$ whose general points have
trivial automorphism groups, we require the existence of a map $h\colon
\rig{\stack{X}}\to\stack{Y}$ making the diagram
\begin{equation*}
\begin{diagram}[height=5ex,width=3em,tight]
&\stack{X} \\
&\dTo^f &\rdTo^g \\
&\rig{\stack{X}} &\rTo^h &\stack{Y}
\end{diagram}
\end{equation*}
2-commutative, and the map $h$ is unique up to unique natural equivalence.

\begin{remark}
Rigidifications are defined in the literature in greater generality
\cite{ACV2003, AOV2008}. Namely, one chooses a subgroup stack $G$ of the
inertia stack $I(\stack{X})$, and defines the
rigidification with respect to $G$ to be a stack receiving a map from
$\stack{X}$, with automorphisms belonging to $G$ being killed, and universal with
this property.  The rigidification of Definition \ref{def:rig} is the special
case where $G$ is taken to be the closure of the
union of the automorphism groups of general points $x$ of $\stack{X}$.  The rigidification
in this sense is known to exist \cite[Example~A.3]{AOV2008} under general
conditions. In order to keep the presentation self contained we give a
direct construction in Proposition \ref{prop:rig} in the situation we need here.
\end{remark}

Without any conditions on $\stack{X}$, the rigidification does
not necessarily exist. We treat an easy special case where it
does exist:
Consider a DM stack of the form $[X/G]$
for an algebraic
group $G$ acting on an irreducible scheme $X$,
and suppose that the general
stabilizer group equals the common stabilizer group 
\begin{equation*}
H = \left\{ g\in G \suchthat gx=x \;\forall\;x\in X\right\}.
\end{equation*}
More precisely, we assume that there is an open subset
$U\subseteq X$ such that $G_x=H$ for all $x\in U$. Note that
$H$ is normal, and the induced action of $G'=G/H$ has
generically trivial stabilizer.

\begin{proposition}\label{prop:rig}
Let $G$ be an algebraic group scheme acting on an irreducible
scheme $X$,
such that $[X/G]$ is a DM stack and
such that
the general stabilizer group equals the
common stabilizer group $H\subseteq G$.  Then the stack
quotient $[X/G]$ admits a rigidification, in fact
\begin{equation*}
\rig{[X/G]}\iso[X/G']
\end{equation*}
where $G'=G/H$.
\end{proposition}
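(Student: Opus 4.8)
The plan is to verify directly that $[X/G']$ satisfies the universal property of Definition \ref{def:rig}. There is a natural map $f\colon [X/G] \to [X/G']$ induced by the quotient homomorphism $G \to G'$: on objects, a principal $G$-bundle $P \to T$ with equivariant map $P \to X$ is sent to the associated $G'$-bundle $P/H \to T$, which inherits an equivariant map to $X$ since $H$ acts trivially on the relevant open set — more precisely, because $H$ fixes every point of $X$, the map $P \to X$ is $H$-invariant and descends to $P/H \to X$. (One should note here that $H$ being the \emph{common} stabilizer, not merely the generic one, is exactly what makes this descent work on all of $X$ rather than just an open substack; this is where the hypothesis is used.) Since the generic stabilizer of the $G'$-action is trivial, a general point of $[X/G']$ has trivial automorphism group, so $f$ is a candidate rigidification; that $f$ is dominant is clear since it is surjective on points.

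Next I would check universality. Given a dominant map $g\colon [X/G] \to \stack{Y}$ to a DM stack with generically trivial automorphisms, I want to factor $g$ through $f$. The key observation is that $f$ is itself a gerbe (an $H$-gerbe, or more carefully: $[X/G] \to [X/G']$ is the quotient of $X$ by the trivially-acting normal subgroup $H$, hence étale-locally on $[X/G']$ it looks like $BH \times (\text{scheme})$). The composite $BH \to [X/G] \xrightarrow{g} \stack{Y}$, restricted over a general point, must be trivial because a general point of $\stack{Y}$ has no automorphisms and $g$ is dominant; hence by irreducibility the $H$-action implicit in $g$ is trivial everywhere, which is precisely the condition needed for $g$ to factor through the quotient $f$. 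Concretely, one can phrase this at the level of the defining groupoids: $[X/G]$ is presented by the groupoid $G \times X \rightrightarrows X$, $[X/G']$ by $G' \times X \rightrightarrows X$, and a map to $\stack{Y}$ is a groupoid morphism; the claim is that any such morphism out of $G \times X \rightrightarrows X$ landing in a stack with generically trivial automorphisms kills the $H$-part of the arrows, hence descends. Uniqueness of $h$ up to unique $2$-isomorphism then follows from the faithful flatness (indeed surjectivity) of $f$ on objects, i.e.\ from $f$ being an epimorphism of stacks.

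The main obstacle I anticipate is the "rigidity" step: making precise and correct the assertion that a morphism $g$ whose target has generically trivial inertia must annihilate the subgroup $H \subseteq \Aut(x)$ at \emph{every} point $x$, not just the general one. This is where irreducibility of $X$ is essential — the locus where the induced homomorphism $H \to \Aut(g(x))$ is trivial is closed (it is where a section of a group scheme over $X$ vanishes), contains the generic point, hence is all of $X$ — but one has to set this up carefully in the $2$-categorical language, tracking the $2$-isomorphism data and not merely the underlying $1$-morphisms. An alternative, possibly cleaner, route is to invoke the general existence of rigidifications from \cite[Example~A.3]{AOV2008} with $G \subseteq I([X/G])$ the (closure of the) generic automorphism group, and then merely identify the resulting stack with $[X/G']$ by constructing mutually inverse maps; this trades the universality verification for an identification argument, which in this explicit quotient situation is routine.
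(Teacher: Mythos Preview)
Your proposal is correct and follows essentially the same strategy as the paper: construct the canonical map $f\colon [X/G]\to[X/G']$, observe that $[X/G']$ has generically trivial inertia, and then verify universality by showing that any dominant $g\colon [X/G]\to\stack{Y}$ with $\stack{Y}$ generically schematic must ``kill $H$'' and hence descend to $[X/G']$.

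The one place where the paper's execution differs from yours is precisely the step you flag as the main obstacle. Rather than formulating the rigidity argument as ``the locus where $H\to\Aut(g(x))$ is trivial is closed'' (which, as you note, is awkward to make precise because the target is not obviously a group scheme over $X$), the paper recasts it as an equivalence-of-maps statement: one must show that the two composites $G'\times X\rightrightarrows X\to\stack{Y}$ given by action and projection are $2$-isomorphic. After pulling back along the flat surjection $G\times X\to G'\times X$ they become $2$-isomorphic by the very definition of $[X/G]$, so the issue is descent of $2$-isomorphism. The paper isolates this as a clean lemma: for maps $S\rightrightarrows\stack{Y}$ to a separated DM stack containing an open dense algebraic space, equivalence is fppf-local on $S$. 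The proof uses separatedness of $\stack{Y}$ to get a closed ``equalizer'' subscheme of $S$, which contains the dense open preimage of the schematic locus and hence is all of $S$. This is the same closedness-plus-density idea you sketch, but packaged so as to avoid any direct manipulation of inertia groups.
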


Before proving the proposition, we establish a lemma.

\begin{lemma}
Let $\stack{Y}$ be a
DM stack
containing an algebraic
space $U\subseteq\stack{Y}$ as an open dense substack. Let
\begin{equation*}
\alpha,\beta\colon S\rightrightarrows \stack{Y}
\end{equation*}
be two maps from a scheme $S$, such that the restrictions of $\alpha$ and
$\beta$ to every component of $S$ are dominant. Then equivalence
$\alpha\iso\beta$ is a local property on $S$. Precisely, if
$p\colon T\to S$ is a surjective, flat map, locally of finite
presentation, such that
$p^*(\alpha)$ and $p^*(\beta)$ are equivalent, then already
$\alpha$ and $\beta$ are
equivalent.
\end{lemma}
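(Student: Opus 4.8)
The plan is to recast the statement as an fppf descent problem for a section of a finite morphism, and to use the density hypothesis to make the descent datum hold automatically. Form the algebraic space $I=\operatorname{Isom}_S(\alpha,\beta)$ over $S$, whose points over $S'\to S$ are the $2$-isomorphisms $\alpha|_{S'}\iso\beta|_{S'}$; it is the base change of the diagonal $\stack Y\to\stack Y\times\stack Y$ along $(\alpha,\beta)\colon S\to\stack Y\times\stack Y$. Since $\stack Y$ is a separated DM stack, this diagonal is proper (separatedness) and unramified (Deligne--Mumford), hence finite, and therefore so is $I\to S$; in particular $I\to S$ is separated. A $2$-isomorphism $\alpha\iso\beta$ is exactly a section of $I\to S$, and the hypothesis provides a section $\widetilde\sigma$ of the base change $I\times_S T\to T$. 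As the sections of $I\to S$ form an fppf sheaf on $S$, it is enough to show that the two pullbacks $s_1,s_2$ of $\widetilde\sigma$ to $T\times_S T$ coincide.

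Here the dominance hypothesis enters. Put $V=\alpha^{-1}(U)\cap\beta^{-1}(U)$. Because $U$ is dense and open in $\stack Y$ and the restrictions of $\alpha$ and $\beta$ to every (irreducible) component of $S$ are dominant, $V$ is dense and open in $S$. Over $V$ the maps $\alpha$ and $\beta$ factor through the algebraic space $U$; restricting the $2$-isomorphism $\widetilde\sigma$ to $p^{-1}(V)$ gives an equality of the two induced morphisms $p^{-1}(V)\to U$ (a $2$-isomorphism of maps to a sheaf is an equality), and since $\operatorname{Hom}(-,U)$ is an fppf sheaf this descends to $\alpha|_V=\beta|_V$. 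As a morphism to an algebraic space admits no nontrivial automorphisms, it follows that $I\times_S V\to V$ is an isomorphism.

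It remains to propagate $s_1=s_2$ off this generic locus. Let $V_{TT}$ be the preimage of $V$ in $T\times_S T$. The morphism $T\times_S T\to S$ is flat and locally of finite presentation, hence open, so $V_{TT}$ is dense in $T\times_S T$. Over $V_{TT}$ the morphism $I\times_S(T\times_S T)\to T\times_S T$ is a base change of the isomorphism $I\times_S V\to V$, hence itself an isomorphism, which has a unique section; so $s_1$ and $s_2$ agree over $V_{TT}$. On the other hand $I\times_S(T\times_S T)\to T\times_S T$ is separated, so the locus on which $s_1=s_2$ is a closed subscheme of $T\times_S T$; containing the dense set $V_{TT}$, it is all of $T\times_S T$. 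Thus $s_1=s_2$, and by fppf descent $\widetilde\sigma$ descends to a section of $I\to S$, that is, to a $2$-isomorphism $\alpha\iso\beta$.

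The one step carrying real content is the equality $s_1=s_2$ on $T\times_S T$: this is the descent datum, and it holds only because $V$, hence $V_{TT}$, is dense, so that the unique section over the generic locus pins the section down everywhere by separatedness of $I\to S$. This is exactly where the dominance hypothesis is indispensable: without it $\alpha$ and $\beta$ could map $S$ entirely into the complement $\stack Y\setmin U$, where an fppf pullback can glue together distinct $2$-isomorphisms, and the lemma would fail.
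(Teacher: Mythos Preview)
Your proof is correct and follows the same underlying strategy as the paper's: restrict to the dense open $V=S'$ where both maps land in the algebraic space $U$, use that equivalence of maps to an algebraic space is an fppf-local condition to get $\alpha|_V\cong\beta|_V$, and then invoke separatedness of the DM stack to propagate from the dense open to everything. The paper compresses the last step into the assertion that there is a ``closed subscheme $S''\subseteq S$ universal for the property that $\alpha|_{S''}\cong\beta|_{S''}$'' and concludes from $S'\subseteq S''$; you instead make this precise by introducing the Isom space $I\to S$ (finite, since the diagonal of a separated DM stack is finite) and checking the fppf descent datum for its section on $T\times_S T$ via the same density--plus--separatedness argument. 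Your formulation is the more careful one---the paper's $S''$ is not obviously well-defined without passing through something like your $I$---but the mathematical content is the same.
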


\begin{proof}
There is an open dense subset $S'\subset S$ whose image under
both $\alpha$ and $\beta$ is contained in $U$.
Since the restricted maps $\res{\alpha}{S'}$ and
$\res{\beta}{S'}$ have the algebraic space $U$ as codomain, it is clear
that equivalence $\res{\alpha}{S'}\iso\res{\beta}{S'}$ is a local property
on $S'$, and so holds by assumption.

Recall that, according to our conventions, the DM stack $\stack{Y}$ is
separated.
Thus there is a
closed subscheme $S''\subseteq S$ which is universal with the
property that the restrictions of $\alpha$ and $\beta$ to $S''$ are
equivalent. We have established that $S''$ contains $S'$, which is
dense in $S$. Thus $S''=S$ and we conclude that $\alpha$
and $\beta$
are equivalent.
\end{proof}

\begin{proof}[Proof of the proposition]
Clearly, the quotient stack $[X/G']$ has generically trivial
stabilizer, and admits a canonical surjective map
\begin{equation*}
f\colon [X/G]\to [X/G'].
\end{equation*}
Let $g\colon [X/G]\to \stack{Y}$ be another dominant map to a
DM stack
$\stack{Y}$ with generically trivial automorphism
groups, and consider the diagram
\begin{equation*}
\begin{diagram}[size=3em]
          &                  &            &                                          & X\\
G\times X & \rTo             & G'\times X & \ruTo(2,1)_{\sigma'} \ruTo(4,1)^{\sigma} &   & \rdTo(2,1)^{\pi} & [X/G] & \rTo^g &\stack{Y}\\
          & \rdTo(4,1)_{p_2} &            & \rdTo(2,1)^{p_2}                         & X & \ruTo(2,1)_{\pi}
\end{diagram}
\end{equation*}
where $\sigma$ and $\sigma'$ are the actions, $p_2$ is second
projection and $\pi$ is the quotient map. We claim that
$g\circ\pi\circ\sigma'$ and $g\circ\pi\circ p_2$ are
equivalent maps $G'\times X\to\stack{Y}$. Once this is
established, it follows from the universality of the quotient
stack $[X/G']$ that $g$ factors through $f$ as required.

We apply the lemma as follows: Let $U\subset\stack{Y}$ be an open
dense substack with trivial automorphism groups everywhere. It is an
algebraic space. By definition of the quotient $[X/G]$, the two
maps $\pi\circ\sigma$ and $\pi\circ p_2$ are equivalent. Hence
also their compositions with $g$ are equivalent.
Taking $R\to S$ in the lemma to be the flat surjective map $G\times
X\to G'\times X$, we find that the two maps from $G'\times X$ to
$\sheaf{Y}$ are already equivalent, as claimed.
\end{proof}

\begin{corollary}\label{cor:projrig}
Let $R = \bigoplus_{d\ge0} R_d$ be a graded $k$-algebra with
$R_0=k$, whose nilradical
is prime (i.e.~$\Spec(R)$ is irreducible). Then $\PProj(R)$
admits a rigidification. In fact, letting \begin{equation*}
n = \hcf\{d \suchthat R_d \ne 0\},
\end{equation*}
we have
\begin{equation*}
\rig{\PProj(R)} \iso \PProj(R^{(n)}).
\end{equation*}
\end{corollary}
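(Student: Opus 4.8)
The plan is to recognise $\PProj(R)$ as a quotient stack of the type handled by Proposition~\ref{prop:rig}, to compute the common stabilizer of the relevant group action, and then to identify the resulting rigidification with a $\PProj$. First I would write $\stack{X}=\PProj(R)=[X/\Gm]$ with $X=\Spec(R)\setmin\{0\}$; since the nilradical of $R$ is prime, $\Spec(R)$, and hence $X$, is irreducible (if $R=k$ then $X=\emptyset$ and there is nothing to prove, so I assume $R\ne k$, whence $n\ge1$). As $\PProj(R)$ has finite stabilizers it is a DM stack, so to apply Proposition~\ref{prop:rig} it suffices to check that the general stabilizer of the $\Gm$-action on $X$ equals the common stabilizer $H\subseteq\Gm$.

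For that computation I would use that $\xi\in\Gm$ acts on $\Spec(R)$ by the automorphism multiplying $R_d$ by $\xi^d$, so that a point $x$ corresponding to a homomorphism $\phi\colon R\to k$ is fixed by $\xi$ precisely when $\xi^d=1$ for every $d$ with $\phi(R_d)\ne0$. Because $n=\hcf\{d\suchthat R_d\ne0\}$, an element acts trivially on all of $\Spec(R)$ if and only if it lies in $\runity_n$; in particular $\runity_n$ is contained both in $H$ and in the stabilizer of every point, and $\Gm/\runity_n$ acts on $X$. For the reverse inclusions I would choose finitely many degrees $d_1,\dots,d_\ell$ with $R_{d_i}\ne0$ and $\hcf(d_1,\dots,d_\ell)=n$, together with nonzero $f_i\in R_{d_i}$, and work over the open set $U=X\cap\bigcap_i D(f_i)$, which is dense because $\Spec(R)$ is irreducible: any $\xi$ fixing a point of $U$ must satisfy $\xi^{d_i}=1$ for all $i$, hence $\xi\in\runity_n$. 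This gives $H=\Stab(x)=\runity_n$ for $x\in U$, and Proposition~\ref{prop:rig} then yields
\begin{equation*}
\rig{\PProj(R)}\iso[X/(\Gm/\runity_n)].
\end{equation*}

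Finally I would identify the right side with $\PProj(R^{(n)})$. Since $n$ divides every degree in which $R$ is nonzero, $R_+=\bigoplus_{d>0}R_d=\bigoplus_{m>0}R_{mn}$, so $R$ and its Veronese subring $R^{(n)}=\bigoplus_{m\ge0}R_{mn}$ are the same ring carrying the same ideal $R_+=R^{(n)}_+$; in particular $X=\Spec(R^{(n)})\setmin\{0\}$ as a scheme, the two gradings differing only by rescaling by $n$. Under the isomorphism $\Gm/\runity_n\iso\Gm$ induced by the $n$'th power map, the residual action of $\Gm/\runity_n$ on $X$ — in which the class of $\xi$ multiplies $R_{mn}$ by $\xi^{mn}=(\xi^n)^m$ — matches the grading action of $\Gm$ defining $\PProj(R^{(n)})$, for which $R^{(n)}_m=R_{mn}$ has weight $m$. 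Hence $[X/(\Gm/\runity_n)]\iso\PProj(R^{(n)})$, which completes the argument.

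The main obstacle I anticipate is the second step, establishing that the hypothesis of Proposition~\ref{prop:rig} actually holds for the presentation $[X/\Gm]$, i.e.\ that the stabilizer of the $\Gm$-action is generically exactly $\runity_n$. The delicate point is the interaction between the grading and possible nilpotents of $R$; it is precisely here that the irreducibility of $\Spec(R)$ enters, ensuring that the loci $D(f_i)$ are dense. Once that is in hand, the remaining identification of $[X/(\Gm/\runity_n)]$ with $\PProj(R^{(n)})$ is a routine matter of tracking the two gradings.
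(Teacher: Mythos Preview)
Your proposal is correct and follows essentially the same route as the paper's proof: identify $\runity_n$ as the common stabilizer, exhibit an open set $U=\bigcap_i D(f_i)$ (with $\hcf(\deg f_i)=n$) on which the stabilizer is exactly $\runity_n$, apply Proposition~\ref{prop:rig}, and then recognise the residual $\Gm/\runity_n$-action as the grading defining $\PProj(R^{(n)})$. Your write-up is more explicit than the paper's about why $R$ and $R^{(n)}$ coincide as rings and about the role of irreducibility in making $U$ dense, but the argument is the same.
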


\begin{proof}
With $n$ as above, the subgroup $\runity_n\subset\Gm$ acts
trivially on $\Spec(R)$.  On the other hand, there exists a
finite set of homogeneous
elements $f_1,\dots,f_r$ in $R$ such that the highest common
factor of their degrees equals $n$. Let $U\subset\Spec(R)$ be
the open set defined by the simultaneous nonvanishing of the
$f_i$'s. Then the stabilizer group of any point in $U$ is
exactly $\runity_n$.
Thus $\runity_n$ is both the common stabilizer group and
the generic stabilizer group, so the proposition applies. The quotient $\Gm/\runity_n$ is again isomorphic to
$\Gm$, and the induced action corresponds to the grading in which $f\in R_d$ is
given degree $d/n$. This is by definition the grading on $R^{(n)}$.
\end{proof}

\begin{remark}\label{rem:gerbe}
In the last Corollary, 
$R^{(n)}$
and $R$ are essentially the same ring, only with a 
different grading.
Geometrically, this can be phrased as follows:
The stack $\PProj(R)$ is
the $n$'th root stack of $\OO(1)$ on
$\PProj(R^{(n)})$, defined similarly as the root stack in Section \ref{sec:root},
only without the section $s$ (this construction can also be found in Cadman's
paper \cite{cadman2007}). At the level of points with automorphisms groups, $\PProj(R)$
is obtained from $\PProj(R^{(n)})$ by sticking in an extra automorphism group $\runity_n$
everywhere. More precisely,
\begin{equation*}
\PProj(R)\to \PProj(R^{(n)})
\end{equation*}
is an \emph{essentially trivial
$\runity_n$-gerbe}. We refer the reader to Lieblich \cite{lieblich2007} and
Fantechi et.~al. \cite{FMN2007} for systematic expositions, but mention
briefly that a $\runity_n$-gerbe over a stack $\stack{X}$ corresponds to an
element of $H^2(\stack{X},\runity_n)$, and is called essentially trivial if the
push forward to $H^2(\stack{X},\Gm)$ vanishes. This is equivalent
\cite[Proposition 2.3.4.4]{lieblich2007} \cite[Remark 6.4]{FMN2007} to the
statement that the gerbe is the $n$'th root stack of a line bundle on
$\stack{X}$.
\end{remark}

\section{The stacky GIT quotient}\label{sec:stackygit}

The invariant ring of binary quartics, and that of cubic plane
curves, are weighted polynomial rings in two variables (see
Sections \ref{sec:moduliquartics} and \ref{sec:cubiccurves}). In
this section we study the more interesting invariant rings for binary quintics, binary
sextics, and cubic surfaces: All of these have the structure
(see Sections 
\ref{sec:moduliquintics}, \ref{sec:modulisextics} and \ref{sec:cubicsurf})
\begin{equation}\label{eq:typicalR}
R \iso k[t_1,\dots,t_{n+1}]/(t_{n+1}^2 - F(t_1,\dots,t_n))
\end{equation}
where $t_i$ are homogeneous generators of some positive weight
$d_i$, and $F$ is a weighted homogeneous polynomial of degree
$2d_{n+1}$. The weights fulfil the following three conditions:
\begin{enumerate}
\item[(i)] The highest common factor $d$ of $d_1,\dots,d_n$ does not
divide $d_{n+1}$.
\item[(ii)] The weights $e_i=d_i/d$, for $i\le n$, are well formed, i.e.
no $n-1$ among them have a common factor.
\item[(iii)] $2d_{n+1}/d$ is even.
\end{enumerate}
The first condition says that $t_{n+1}$ is not an element of the
subring $R^{(d)}\subseteq R$, which thus is a weighted polynomial
ring. The second condition says that its generators $t_1,\dots,t_n$
have well formed weights. The last condition says that the
degree of $F$, as an element of $R^{(d)}$, is even, which is the
condition needed to apply Lemma \ref{lem:root} to extract a
square root.

Recall \cite[Section 4.1]{FMN2007} that any variety $X$, with at worst finite quotient
singularities, is in a canonical way the coarse space of a smooth
DM stack,
the \emph{canonical stack} $\can{X}$.
More precisely, for any
smooth DM stack
$\stack{Y}$ with $X$ as coarse space, there is a unique map $\stack{Y}\to\can{X}$,
compatible with the maps to $X$.
Thus, if
$\stack{X}$ is a
DM stack admitting a smooth rigidification,
having a variety
$X$ with finite quotient singularities as coarse space,
then the universal properties of the rigidification and the canonical stack yield a
factorization of the canonical map $\stack{X}\to X$ as
\begin{equation*}
\stack{X} \to \rig{\stack{X}} \to \can{X} \to X.
\end{equation*}

\begin{theorem}\label{thm}
Let $R$ be a graded $k$-algebra of the form \eqref{eq:typicalR},
satisfying conditions (i), (ii)
and (iii) above. Let $\stack{X}$ be
the stack $\PProj(R)$ and let $X$ be its coarse space $\Proj(R)$.
\begin{enumerate}
\item $X$ is the weighted projective space $\PP(e_1,\dots,e_n)$,
and its canonical stack is the weighted projective stack $\can{X} = \stack{P}(e_1,\dots,e_n) = \PProj(R^{(d)})$.
\item The rigidification of $\stack{X}$ is
$\rig{\stack{X}} = \PProj(R^{(d/2)})$.
\item The map $\rig{\stack{X}}\to
\can{X}$ is the square root along $F$, considered as a
section of $\OO_{\can{X}}(2d_{n+1}/d)$.
\end{enumerate}
\end{theorem}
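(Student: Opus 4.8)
The plan is to prove the three statements essentially in order, since each relies on the previous one, and the main technical engine will be Corollary \ref{cor:projrig} and Lemma \ref{lem:root}.

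For part (1), I would first determine the ring $R^{(d)}$. By condition (i) the element $t_{n+1}$ does not lie in $R^{(d)}$, so the degree-$d$-divisible part of $R$ is generated by $t_1,\dots,t_n$ (the relation $t_{n+1}^2 = F$ does not introduce new elements of $R^{(d)}$ beyond polynomials in the $t_i$, $i\le n$, because $t_{n+1}^2$ has degree $2d_{n+1}$ which by (i) is not divisible by $d$ — wait, one must check $2d_{n+1}/d$ is an integer, which is exactly (iii), so $t_{n+1}^2\in R^{(d)}$ and equals $F(t_1,\dots,t_n)$, a relation already among the $t_i$). Hence $R^{(d)} = k[t_1,\dots,t_n]$ with $t_i$ of weight $e_i = d_i/d$, a genuine weighted polynomial ring, well formed by condition (ii). Then $\Proj(R) = \Proj(R^{(d)}) = \PP(e_1,\dots,e_n)$, using that $\Proj$ depends only on the Veronese subring, and the canonical stack of a well-formed weighted projective space is the weighted projective stack $\stack{P}(e_1,\dots,e_n) = \PProj(R^{(d)})$ — this last identification is the standard fact recalled just before the theorem (a smooth DM stack with the given coarse space, and well-formedness guarantees it is the canonical one; here I would cite \cite[Section 4.1]{FMN2007} or the discussion in the paper).

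For part (2), I would invoke Corollary \ref{cor:projrig} directly: $\rig{\PProj(R)} = \PProj(R^{(m)})$ where $m = \hcf\{e \mid R_e \ne 0\}$. So the task reduces to computing $m$. The degrees occurring in $R$ are those of monomials $t_1^{a_1}\cdots t_n^{a_n} t_{n+1}^\varepsilon$ with $\varepsilon\in\{0,1\}$; the $\varepsilon=0$ monomials contribute multiples of $d$ (with hcf exactly $d$ by (ii), since well-formedness of the $e_i$ forces $\hcf(e_1,\dots,e_n)=1$, hence $\hcf(d_1,\dots,d_n)=d$), and the $\varepsilon=1$ monomials contribute $d_{n+1}$ plus a multiple of $d$. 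Thus $m = \hcf(d, d_{n+1})$. Now condition (i) says $d\nmid d_{n+1}$, and condition (iii) says $2d_{n+1}/d\in\ZZ$, i.e. $d \mid 2d_{n+1}$; together these force $d_{n+1}/\hcf(d,d_{n+1})$ and $d/\hcf(d,d_{n+1})$ to be coprime with the latter equal to $2$, so $\hcf(d, d_{n+1}) = d/2$. Hence $m = d/2$ and $\rig{\stack{X}} = \PProj(R^{(d/2)})$. I should also record that $R^{(d/2)}$, as a ring, is $k[t_1,\dots,t_{n+1}]/(t_{n+1}^2 - F)$ with $t_i$ now of weight $2e_i = d_i/(d/2)$ for $i\le n$ and $t_{n+1}$ of weight $d_{n+1}/(d/2) = 2d_{n+1}/d =: 2D$, and that these weights are coprime-to-nothing-new: actually the hcf of all weights of $R^{(d/2)}$ is $1$ by construction, consistent with generic trivial automorphisms.

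For part (3), the goal is to identify $\PProj(R^{(d/2)}) = \rig{\stack{X}}$ with the square root stack of $\can{X} = \PProj(R^{(d)})$ along $F \in \Gamma(\OO_{\can{X}}(2D))$ where $2D = 2d_{n+1}/d$. This is precisely the situation of Lemma \ref{lem:root}, applied with the base ring being $R^{(d)} = k[t_1,\dots,t_n]$ (weights $e_i$), the section $s = F$ of degree $n_{\mathrm{Lem}} := 2D$, and root order $r = 2$. The lemma's hypothesis is that $r=2$ and $n_{\mathrm{Lem}} = 2D$ have no common factor — but $2D$ is even, so this fails as stated! Here is the main obstacle, and the resolution is to re-grade: Lemma \ref{lem:root} should be applied not with $F$ viewed as a section of $\OO(2D)$ but after passing to the Veronese-type regrading that makes $F$ have odd degree, or — more carefully — one applies the lemma with the base being $(R^{(d)})$ suitably reorganized so that $F$ sits in a degree coprime to $2$. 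The clean way: the lemma produces $S = (R^{(d)})^{(1/(2D))}[t]/(t^2 - F)$ graded with $t$ of degree $2D$; one then checks this graded ring is isomorphic, as a graded ring, to $R^{(d/2)}$ under the dictionary $t \leftrightarrow t_{n+1}$, and that the regrading $(-)^{(1/(2D))}$ followed by adjoining $t$ of degree $2D$ reproduces exactly the weights $2e_i = d_i/(d/2)$ and $2D = d_{n+1}/(d/2)$. The hypothesis "$\hcf(r,n)=1$" in Lemma \ref{lem:root} must be checked in the form actually needed: after the $(1/(2D))$ regrading the variables $t_i$ acquire degrees that are multiples of $2D$, and one needs $r=2$ coprime to... — in fact the correct reading is that the lemma applies to extract the square root because condition (iii) guarantees $F$ has even degree in $R^{(d)}$, which is exactly the divisibility needed for $t^2 - F$ to define a $\ZZ$-graded (not just $\ZZ/2$-graded) ring, and the coprimality "$\hcf(2, \cdot)=1$" is to be applied to the regraded setup where it does hold. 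I would spell out this bookkeeping explicitly, as it is the one genuinely delicate point; the rest is then formal, invoking the uniqueness part of the universal property of $\can{X}$ to match the canonical map $\rig{\stack{X}}\to\can{X}$ with the root-stack structure map $\pi$ of Section \ref{sec:root}.

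The step I expect to be the main obstacle is precisely this reconciliation in part (3) of the even degree $2d_{n+1}/d$ of $F$ with the coprimality hypothesis of Lemma \ref{lem:root}: one must be careful that the lemma is being invoked over the right base ring with $F$ placed in the right degree, and that condition (iii) is exactly what licenses the square-root extraction. Everything else — the computation of $R^{(d)}$ and $R^{(d/2)}$ as explicit graded rings, the hcf computations giving $d/2$, and the appeals to the universal properties of rigidification and the canonical stack — is routine once the regrading conventions are pinned down.
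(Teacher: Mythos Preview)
Your treatment of parts (1) and (2) is correct and matches the paper's argument closely, including the appeal to Corollary~\ref{cor:projrig} and the computation $\hcf(d_1,\dots,d_{n+1})=d/2$.

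In part (3), however, you manufacture an obstacle that is not there. You assert that $2D=2d_{n+1}/d$ is even and then spend the remainder of the argument trying to work around a supposed failure of the coprimality hypothesis in Lemma~\ref{lem:root}. But your own work in part (2) already shows that $2d_{n+1}/d$ is \emph{odd}: you established $\hcf(d,d_{n+1})=d/2$, hence $d_{n+1}/(d/2)$ is coprime to $d/(d/2)=2$, and $2d_{n+1}/d=d_{n+1}/(d/2)$. (The symbol $D=d_{n+1}/d$ is not an integer, by condition (i), so writing the degree as ``$2D$'' has misled you.) With $\gcd(2,\,2d_{n+1}/d)=1$ in hand, Lemma~\ref{lem:root} applies directly to the base ring $R^{(d)}$ with $s=F$ and $r=2$, and the resulting graded ring $S$ is literally $R^{(d/2)}$ under $t\leftrightarrow t_{n+1}$. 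This is exactly the paper's one-line proof of (3). Your proposed regrading workaround is therefore unnecessary, and as sketched it is not a proof: you never exhibit a setting in which the coprimality hypothesis of Lemma~\ref{lem:root} is actually satisfied, and the passage beginning ``in fact the correct reading is\dots'' is hand-waving rather than an argument.

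(The wording of condition (iii) in the paper, ``$2d_{n+1}/d$ is even'', is evidently a slip: taken literally it contradicts condition (i), and in every example treated in Sections~\ref{sec:moduli} and~\ref{sec:cubics} the quantity $2d_{n+1}/d$ is odd. What is actually used is that $2d_{n+1}/d$ is an integer; its oddness, needed for Lemma~\ref{lem:root}, then follows from (i) as above.)
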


\begin{proof}
The coarse moduli space of the stack $\PProj(R)$ is the scheme
$\Proj(R)$. Since $\Proj(R)\iso\Proj(R^{(d)})$, and
\begin{equation*}
R^{(d)} \iso k[t_1,\dots,t_n]
\end{equation*}
is a weighted polynomial ring, where each generator $t_i$ has
degree $e_i=d_i/d$, it is clear that the coarse moduli space is
the weighted projective space as claimed.
It is a standard fact \cite[Example 7.25]{FMN2007}
that its canonical smooth stack is $\stack{P}(e_1,\dots,e_n) =
\PProj(R^{(d)})$, using that the weights are well
formed. This proves (1).

Next we apply Corollary \ref{cor:projrig}. Since $2d_{n+1} = \deg F$
and $d$ divides $\deg F$, we see that $d$ is even and the highest
common factor of $d_1,\dots,d_{n+1}$ is $d/2$ (using the
assumption that $d$ does not divide $d_{n+1}$). This proves (2).

Finally, Lemma \ref{lem:root} immediately gives
$\PProj(R^{(d/2)})$ as the square root stack of $\PProj(R^{(d)})$
along $F$. This proves (3).
\end{proof}

\begin{remark}
The theorem tells us in particular that the stack $\stack{X}$
remembers not only its coarse moduli space $X$, but also the
divisor defined by $F$. Conversely, knowing $X$ and $F$, we can
reconstruct the rigidification of $\stack{X}$ by extracting a
square root of $F$ on the canonical stack associated to $X$.
Finally, $\stack{X}$ is an essentially trivial
$\runity_{(d/2)}$-gerbe over its rigidification, as in Remark
\ref{rem:gerbe}.
\end{remark}

\section{Symmetries of binary forms}\label{sec:sym}

The aim of this section is to survey Klein's classification
\cite{klein1884} of binary forms according to
their symmetries, i.e.~their stabilizer groups. Throughout, we
identify forms that differ by a nonzero scalar factor. Thus,
by the stabilizer group of a binary form $f$, we shall mean the
elements of $\SL(2)$ under which $f$
is semi-invariant, i.e.~invariant up to a nonzero scalar factor.

Recall that a binary form of degree $d$ is stable if and only if
all its roots have multiplicity strictly less than $d/2$. Such a binary
form has finite stabilizer group in $\SL(2)$. More generally, any binary form with
at least three distinct zeros has finite stabilizer group. This leaves
just the case $f=x^ny^m$ (modulo $\SL(2)$ and scale), whose stabilizer
group consists of all diagonal matrices in $\SL(2)$ if $n\ne m$, and all diagonal
and antidiagonal matrices if $n=m$. From now on we assume that $f$
is a binary form with finite stabilizer group $G\subset\SL(2)$.
Hence $G$ is a cyclic, dihedral,
tetrahedral, octahedral or icosahedral group, by the well known classification
of finite subgroups of $\SL(2)$. More precisely, a conjugate of $G$ equals one
of the subgroups listed in Table \ref{table:generators}. The conjugation
corresponds to picking another representative for the orbit of $f$ under
$\SL(2)$, since $\gamma G\gamma^{-1}$ is the stabilizer group of $\gamma f$.

Thus, to classify binary forms with finite stabilizer group, it
suffices to determine the semi-invariant forms for each group $G$ in
Table \ref{table:generators}. A general $G$-orbit in $\PP^1$ has
degree $|G|/2$. For non-cyclic $G$, there are precisely
three special orbits of smaller degree, defined by the
vanishing of three so called ground forms $F_1$, $F_2$, and
$F_3$. These are listed in Table \ref{table:groundforms}. We put
$\nu_i = |G|/(2\deg F_i)$.

\begin{table}
\begin{tabular}{l|ll}
Group   & Generators
\\ \hline\\[-2ex]
$C_n$ &
$
\left(
\begin{matrix}
\epsilon & 0\\
0 & \epsilon^{-1}
\end{matrix}
\right)$
& $\text{($\epsilon = \sqrt[2n]{1}$ primitive)}$
\\[1em] \hline\\[-2ex]
$D_n$
&
$
\left(
\begin{matrix}
\epsilon & 0\\
0 & \epsilon^{-1}
\end{matrix}
\right),
\left(
\begin{matrix}
0 & i\\
i & 0
\end{matrix}
\right)$
& $\text{($\epsilon = \sqrt[2n]{1}$ primitive)}$
\\[1em] \hline\\[-2ex]
$T$ &
\multicolumn{2}{l}{
$
\left(
\begin{matrix}
i & 0 \\
0 & -i
\end{matrix}
\right),
\left(
\begin{matrix}
0 & i \\
i & 0
\end{matrix}
\right),
\frac{1}{2}\left(
\begin{matrix}
1+i & -1+i \\
1+i & 1-i
\end{matrix}
\right)
$}
\\[1em] \hline\\[-2ex]
$O$ &
\multicolumn{2}{l}{
$
\left(
\begin{matrix}
i & 0 \\
0 & -i
\end{matrix}
\right),
\left(
\begin{matrix}
0 & i \\
i & 0
\end{matrix}
\right),
\frac{1}{2}\left(
\begin{matrix}
1+i & -1+i \\
1+i & 1-i
\end{matrix}
\right),
\frac{1}{\sqrt{2}}\left(
\begin{matrix}
1+i & 0\\
0 & 1-i
\end{matrix}
\right)
$}
\\[1em] \hline\\[-2ex]
$I$ &
$
\left(
\begin{matrix}
\epsilon^3 & 0\\
0 & \epsilon^2
\end{matrix}
\right),
\frac{1}{\sqrt{5}}\left(
\begin{matrix}
\epsilon-\epsilon^4 & \epsilon^3-\epsilon^2 \\
\epsilon^3-\epsilon^2 & \epsilon^4-\epsilon
\end{matrix}
\right)$
& $\text{($\epsilon = \sqrt[5]{1}$ primitive)}$
\end{tabular}
\vspace{1ex}
\caption{Polyhedral groups}\label{table:generators}
\end{table}

\begin{table}
\begin{tabular}{l|ll}
Group   & Ground forms
\\ \hline\\[-2ex]
$D_n$
&
$
\begin{array}{l}
F_1 = x^n+y^n \\
F_2 = x^n-y^n \\
F_3 = xy
\end{array}
$
\\ \hline\\[-2ex]
$T$
&
$
\begin{array}{l}
F_1 = x^4+2\sqrt{-3}x^2y^2+y^4 \\
F_2 = x^4-2\sqrt{-3}x^2y^2+y^4 \\
F_3 = xy(x^4-y^4)
\end{array}
$
\\ \hline\\[-2ex]
$O$
&
$
\begin{array}{l}
F_1 = xy(x^4-y^4) \\
F_2 = x^8+14x^4y^4+y^8 \\
F_3 = x^{12}-33x^8y^4-33x^4y^8+y^{12}
\end{array}
$
\\ \hline\\[-2ex]
$I$
&
$
\begin{array}{l}
F_1 = xy(x^{10}+11x^5y^5-y^{10}) \\
F_2 = -(x^{20}+y^{20})+228(x^{15}y^5-x^5y^{15}) - 494x^{10}y^{10} \\
F_3 = (x^{30}+y^{30}) + 522(x^{25}y^5-x^5y^{25}) - 10005(x^{20}y^{10}+x^{10}y^{20})
\end{array}
$
\end{tabular}
\vspace{1ex}
\caption{Ground forms}\label{table:groundforms}
\end{table}

\begin{lemma}[Klein \cite{klein1884}]\label{lem:klein}
A binary form $f$ is semi-invariant under the cyclic group $C_n$ if and only if (up to a scalar factor)
\begin{equation*}
f = x^\alpha y^\beta \prod_{i=1}^N (\lambda_i x^n + \mu_i y^n)
\end{equation*}
where $\alpha$, $\beta$ and $N$ are nonnegative integers, and $(\lambda_i:\mu_i)\in\PP^1$ are parameters.

A binary form $f$ is semi-invariant under one of the groups $D_n$, $T$, $O$,
$I$ if and
only if (up to a scalar factor)
\begin{equation*}
f = F_1^\alpha F_2^\beta F_3^\gamma\prod_{i=1}^N(\lambda_iF_1^{\nu_1}+\mu_iF_2^{\nu_2})
\end{equation*}
where $\alpha$, $\beta$, $\gamma$ and $N$ are nonnegative integers,
$(\lambda_i:\mu_i)\in\PP^1$ are parameters, and $F_1$, $F_2$ and $F_3$ are the ground forms associated with the group.
\end{lemma}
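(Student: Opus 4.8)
The plan is to replace the statement about forms by a statement about effective divisors on $\PP^1$, and then to read the $\overline G$-invariant divisors off the orbit structure of $\overline G\subset\mathrm{PGL}(2)$, where $\overline G$ denotes the image of the finite group $G\subseteq\SL(2)$ in question.

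The dictionary I would set up is this: a form $f$ of positive degree is determined up to a scalar by its zero divisor $Z(f)$ on $\PP^1$, and $f$ is semi-invariant under $G$ if and only if $Z(f)$ is invariant under $\overline G$. Indeed, if $Z(f)$ is $\overline G$-invariant then $f$ and $g\cdot f$ have the same zero divisor for every $g\in G$, hence differ by a scalar $\chi(g)$, and $g\mapsto\chi(g)$ is automatically a character; the converse is immediate. So the lemma is equivalent to the assertion that the $\overline G$-invariant effective divisors on $\PP^1$ are exactly the divisors $Z$ of the two displayed products --- equivalently, that every $\overline G$-orbit in $\PP^1$ is cut out either by a monomial $x$ or $y$ (respectively a ground form $F_i$), or by a form $\lambda x^n+\mu y^n$ (respectively $\lambda F_1^{\nu_1}+\mu F_2^{\nu_2}$).

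For $C_n$ this is straightforward: $\overline C_n$ is cyclic of order $n$, with the two fixed points $(1:0)=Z(y)$ and $(0:1)=Z(x)$ and all other orbits free of size $n$, each equal to $Z(\lambda x^n+\mu y^n)$ for suitable $\lambda\mu\neq 0$; a $\overline C_n$-invariant effective divisor assigns a common multiplicity to each orbit and is therefore a nonnegative combination of these, which (absorbing the cases $\lambda_i\mu_i=0$ into $x^\alpha y^\beta$) is the first formula. For a non-cyclic polyhedral group I would use the quotient $\phi\colon\PP^1\to\PP^1/\overline G\iso\PP^1$, a Galois cover of degree $|\overline G|=|G|/2$. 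By Riemann--Hurwitz, together with transitivity of $\overline G$ on each fibre, $\phi$ is branched over exactly three points, the three ramification orbits being the special orbits $Z(F_1),Z(F_2),Z(F_3)$, with ramification indices $\nu_i=|\overline G|/\deg F_i=|G|/(2\deg F_i)$. The crucial point is then that the fibres of $\phi$ form a pencil: writing $\phi=(A:B)$ with $A,B$ coprime forms of degree $|\overline G|$, the fibres are the divisors $Z(\mu A-\lambda B)$ for $(\lambda:\mu)\in\PP^1$, i.e.\ the members of the pencil spanned by $A$ and $B$. Two distinct members of this pencil are the fibres over two of the three branch points, namely $\nu_1 Z(F_1)=Z(F_1^{\nu_1})$ and $\nu_2 Z(F_2)=Z(F_2^{\nu_2})$ (using that each $F_i$ cuts out $Z(F_i)$ with multiplicity one, by a degree count), and a pencil is determined by any two of its members; hence the fibres of $\phi$ are precisely the divisors $Z(\lambda F_1^{\nu_1}+\mu F_2^{\nu_2})$. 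A generic such fibre is a free orbit of $\overline G$ counted once, and for the three branch values it equals $\nu_i Z(F_i)$, so every orbit of $\overline G$ is either one of the $Z(F_i)$ or a generic fibre. It follows that every $\overline G$-invariant effective divisor has the form $\alpha Z(F_1)+\beta Z(F_2)+\gamma Z(F_3)+\sum_i Z(\lambda_i F_1^{\nu_1}+\mu_i F_2^{\nu_2})$, which is the second formula.

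The steps I am leaving out are routine: that $-I\in G$ in each case, so that all $G$-invariants have even degree and semi-invariance is the appropriate notion; the degree count identifying the ground forms of Table \ref{table:groundforms} with the forms cutting out the special orbits with multiplicity one; and the Riemann--Hurwitz count $\sum_i(1-1/\nu_i)=2-2/|\overline G|$ forcing exactly three branch points in the non-cyclic case. The real content --- and what I expect to be the main obstacle, unless one simply cites it --- is the classical description of the quotient $\PP^1\to\PP^1/\overline G$ and of its pencil of fibres, which goes back to Klein \cite{klein1884}; once that is in hand the rest is bookkeeping with divisors.
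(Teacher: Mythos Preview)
The paper does not give its own proof of this lemma; it is stated as a classical result and attributed to Klein, followed only by a remark on the geometric interpretation of the ground forms. Your sketch is correct and is essentially the classical argument: translate semi-invariance into $\overline G$-invariance of the zero divisor on $\PP^1$, and then read off the $\overline G$-orbits from the branched cover $\PP^1\to\PP^1/\overline G\iso\PP^1$, using Riemann--Hurwitz to pin down the three special orbits and the pencil $\lambda F_1^{\nu_1}+\mu F_2^{\nu_2}$ to parametrize the generic ones. There is nothing to compare against in the paper beyond the citation to Klein, and your outline is precisely in that spirit.
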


\begin{remark}
Consider the central projection of an inscribed regular
polyhedron onto the Riemann sphere $\PP^1$ (over $\CC$).
Then the ground forms of the group corresponding to the
polyhedron have as zero loci the vertices,
the midpoints of the
faces and the midpoints of the edges,
respectively.
A similar statement applies
to the cyclic and dihedral groups. The generic orbits, on the other hand, are
the zero loci of the forms
$\lambda F_1^{\nu_1} + \mu F_2^{\nu_2}$.
\end{remark}

For each fixed (and small) $d$, we can use the lemma to explicitly write down
all semi-invariants of degree $d$ for each group in Table
\ref{table:generators}.  This easily leads to the following classification of
binary forms of low degree, where we include only the stable cases, and write
$\Stab(f)\subset\SL(2)$ for the stabilizer subgroup of $f$.

\subsection{Binary quartics}\label{sec:symquartics}

Every stable, i.e.~square free, quartic is equivalent under the
$\SL(2)$-action to
\begin{equation*}
f = \lambda(x^2+y^2)^2 + \mu(x^2-y^2)^2
\end{equation*}
for some $(\lambda:\mu)$, and so has the dihedral group $D_2$
contained in its stabilizer. Modulo the $\SL(2)$-action, there are exactly two quartics with larger stabilizer group:
\begin{align*}
&\text{(I)}  & f &= x^4 + y^4 & \Stab(f)&=D_4 \\
&\text{(II)} & f &= x^4 + 2\sqrt{-3}x^2y^2 + y^4 & \Stab(f)&=T
\end{align*}

\subsection{Quintics}\label{sec:symquintics}

A stable quintic is one with at most double roots. All quintics
are stabilized by $C_1=\{\pm1\}$. Modulo $\SL(2)$, the
quintics with larger stabilizer group are the following:
\begin{align*}
&\text{(I)}   & f &= x(x^2+y^2)(\lambda x^2 + \mu y^2) & \Stab(f)
&=C_2\\
&\text{(II)}  & f &= x^2(x^3+y^3) & \Stab(f) &= C_3\\
&\text{(III)} & f &= x(x^4+y^4) & \Stab(f) &= C_4\\
&\text{(IV)}  & f &= xy(x^3+y^3) & \Stab(f) &= D_3\\
&\text{(V)}   & f &= x^5+y^5 & \Stab(f) &= D_5
\end{align*}
Here, the pair $(\lambda: \mu)\in\PP^1$ is a parameter assumed to have
generic value, so that the listed cases are disjoint.

\subsection{Sextics}\label{sec:symsextics}

A stable sextic is one with at most double roots. All sextics are
stabilized by $C_1=\{\pm1\}$. Modulo $\SL(2)$, the sextics with
larger stabilizer group are the following:
\begin{align*}
&\text{(I)}   & f &= (x^2+y^2)\prod_{i=1}^2(\lambda_i x^2 + \mu_i
y^2) & \Stab(f) &= C_2\\
&\text{(II)}  & f &= x(x^5+y^5) & \Stab(f) &= C_{5}\\
&\text{(III)} & f &= 
xy(\lambda (x^2+y^2)^2
+ \mu (x^2-y^2)^2)
& \Stab(f) &=  D_2\\
&\text{(IV)}  & f &=
\lambda (x^3+y^3)^2
+ \mu(x^3-y^3)^2
& \Stab(f) &= D_3\\
&\text{(V)}   & f &= x^6+y^6 & \Stab(f) &= D_6\\
&\text{(VI)}  & f &= xy(x^4-y^4) & \Stab(f) &= O\\
&\text{(VII)}  & f &= x^2y(x^3+y^3) & \Stab(f) &= C_3\\
&\text{(VIII)} & f &= x^2(x^4+y^4)  & \Stab(f) &= C_4
\end{align*}
Here again the parameters $\lambda$, $\mu$, $\lambda_i$, $\mu_i$ are
assumed to have generic values, so that the listed cases are
disjoint.

Bolza \cite{bolza1887} produced a list
of symmetry groups for square free sextics, which is equivalent
to the first six items in our list. As we will return to Bolza's
work in Section \ref{sec:modulisextics}, we remark that the labels (I)-(VI) we
are using agree with Bolza's.

\section{Moduli spaces of binary forms}\label{sec:moduli}

We let
\begin{equation*}
R\subset k[a_0,\dots,a_d]
\end{equation*}
be the invariant ring for the $\SL(2)$-action on degree $d$
binary forms
\begin{equation*}
f = a_0 x^d + a_1 x^{d-1}y + \dots a_d y^d.
\end{equation*}
In this section we apply the results from the previous sections to analyse the
geometry of $\PProj(R)$, for small values of $d$.
In particular, we describe
the loci of binary forms with prescribed symmetry group in terms of the
intrinsic geometry of the stacky GIT quotient.

For $d=4$, we find that quartics with extra symmetries show up as points
in the stacky GIT quotient with nontrivial automorphism groups.

For $d=5$ and $d=6$, the invariant ring $R$ has the form studied in Section
\ref{sec:stackygit}. Thus, from the stacky GIT quotient we obtain the usual GIT
quotient scheme $\Proj(R)$ together with the divisor $Z(F)$. We find that the binary
forms corresponding to singularities of the scheme $\Proj(R)$ have special symmetry groups,
but there are also loci of binary forms with symmetries that do not give rise
to singularities. However, these loci show up as $Z(F)$, its
singularities, or the singularities of its singular locus. Thus the knowledge
of $\Proj(R)$ together with the divisor $Z(F)$ suffices to enable a geometric
description of all loci of binary forms with prescribed symmetry group.

For the explicit computations needed in this section we rely on a
computer algebra system such as Singular \cite{singular}. Armed
with such a system, the calculations are straight forward, and we
only give the results.

\subsection{Binary quartics}\label{sec:moduliquartics}

The invariant ring $R$ for binary forms of degree $4$ is freely generated by
two homogeneous invariants
\begin{align*}
I_2 &= a_0 a_4 - 4 a_1 a_3 + 3 a_2^2\\
I_3 &= a_0 a_2 a_4 - a_0 a_3^2 + 2 a_1 a_2 a_3 - a_1^2 a_4 -
a_2^3,
\end{align*}
where the subscript $i$ of an invariant $I_i$ indicates its
degree. Thus $R=k[I_2,I_3]$ is a weighted polynomial ring, and
the GIT quotient is
\begin{equation*}
X = \Proj(R) \iso \PP^1.
\end{equation*}
In particular it is a homogeneous space, so, morally, it looks
the same at all points. Thus the geometry of the quotient does
not single out the
two points corresponding to the special quartics (I) and (II)
from Section \ref{sec:symquartics}. On the other hand, the stacky GIT
quotient
\begin{equation*}
\stack{X} = \PProj(R) = \stack{P}(2,3)
\end{equation*}
is a weighted projective line, in the stack sense, and the two
special points $(1:0)$ and $(0:1)$ corresponding to special
quartics are distinguished by having automorphism groups
$\runity_2$ and $\runity_3$, respectively.

In the language of the root construction, the stack $\stack{X}$
is obtained from its coarse space $X$ by extracting a square root
of $(1:0)$ and a cube root of $(0:1)$.

\subsection{Binary quintics}\label{sec:moduliquintics}

The invariant ring for binary quintics can be written
\begin{equation*}
R = k[I_4, I_8, I_{12}, I_{18}]/(I_{18}^2-F(I_4,I_8,I_{12}))
\end{equation*}
where the generators $I_i$ are homogeneous of degree $i$ and $F$
is (weighted) homogeneous of degree $36$.

Thus Theorem \ref{thm} applies: The GIT quotient scheme is the
weighted projective plane
\begin{equation*}
X = \Proj(R) = \PP(1,2,3)
\end{equation*}
and the canonical stack $\can{X}$ is the weighted projective stack
$\stack{P}(1,2,3)$.
The stacky GIT quotient
$\stack{X}=\PProj(R)$ is an essentially trivial $\runity_2$-gerbe
over its rigidification $\rig{\stack{X}}$, which is obtained from
$\can{X}$ by extracting a square root of $F$.

We note that the weighted projective plane $X$ has cyclic
quotient singularities at $(0:1:0)$ and $(0:0:1)$, and is
otherwise smooth. The stack $\stack{X}$ also remembers the
divisor defined by $F$, which we now analyse.

The generators $I_i$ for the invariant ring are not uniquely
defined. In the following we choose the generators given by Schur
\cite{schur68}. With this choice, we have\footnote{Schur does not
give the relation, but it can be found in Elliot's book
\cite{elliot1895}. Elliot's and Schur's invariants $I_i$ agree up to
scale.}
\begin{equation}\label{eq:quintic-F}
324F(I_4,I_8,I_{12}) =
-9I_4I_8^4-24I_8^3I_{12}+6I_4^2I_8^2I_{12}
+72I_4I_8I_{12}^2+144I_{12}^3-I_4^3I_{12}^2,
\end{equation}
which is irreducible and is singular at $(1:0:0)$ and
$(-3:3:3)$. Denoting the closures of the loci of special quintics with Roman
numerals (I)-(V), according to the list in Section
\ref{sec:symquintics}, we
have:
\begin{itemize}
\item (I) is the divisor $Z(F)$
\item (II) and (III) are the two singularities of $\PP(1,2,3)$
\item (IV) and (VI) are the two singularities of $Z(F)$
\end{itemize}
Moreover, the curve $Z(F)$ passes through (III) but avoids
(II). The situation is summarized in Figure \ref{fig:quintics}.

\begin{figure}
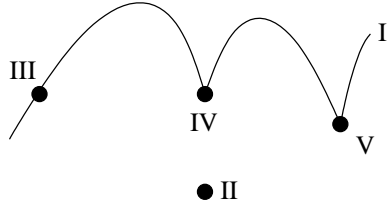

\input bin-quintics.pstex_t
\caption{Inside the moduli space of binary quintics.}\label{fig:quintics}
\end{figure}

\subsection{Binary sextics}\label{sec:modulisextics}

The invariant ring for binary sextics can be written
\begin{equation*}
R = k[I_2,I_4,I_6,I_{10},I_{15}]/(I_{15}^2 -
F(I_2,I_4,I_6,I_{10}))
\end{equation*}
where the generators $I_i$ are homogeneous of degree $i$ and $F$
is (weighted) homogeneous of degree $30$.

Thus Theorem \ref{thm} applies: The GIT quotient scheme is the
weighted projective space
\begin{equation*}
X = \Proj(R) = \PP(1,2,3,5)
\end{equation*}
and the canonical stack $\can{X}$ is the weighted projective stack
$\stack{P}(1,2,3,5)$.
The stacky GIT quotient $\stack{X} =
\PProj(R)$ is its own rigidification, and it is obtained from
$\can{X}$ by extracting a square root of $F$.

The weighted projective space $X$ has cyclic quotient
singularities at $(0:1:0:0)$, $(0:0:1:0)$ and $(0:0:0:1)$ and is
otherwise smooth. We next analyse the divisor $Z(F)$.

For a specific choice of generators $I_i$, Clebsch
\cite{clebsch1872} gives $F$ explicitly as twice the determinant
of the $3\times 3$ symmetric matrix $(a_{ij})$ with entries
\begin{align*}
a_{11} &= 2I_6 + \tfrac{1}{3}I_2I_4 & a_{23} &= \tfrac{1}{3}I_4(I_4^2+I_2I_6) + \tfrac{1}{3}I_6(2I_6+\tfrac{1}{3}I_2I_4)\\
a_{12} &= \tfrac{2}{3}(I_4^2+I_2I_6) & a_{33} &= \tfrac{1}{2}I_6I_{10}+\tfrac{2}{9}I_6(I_4^2+I_2I_6)\\
a_{13} &= I_{10} & a_{22} &= I_{10}.
\end{align*}
This polynomial $F$ is irreducible. Its zero locus $Z(F)$ is a
surface which is singular along a curve, having two components.
Each component has one singular point.

It turns out that these loci matches the
classification of sextics from Section \ref{sec:symsextics}: Again we
use roman numerals (I)-(VIII) for the closures of the loci in $X$
corresponding to the special sextics.
Equations for the loci (I)-(VI) were determined by Bolza
\cite{bolza1887}, and the points corresponding to the remaining
special sextics (VII) and (VIII) can be determined by evaluating
explicit expressions for the invariants $I_i$. The results are as
follows:
\begin{itemize}
\item (I) is the divisor $Z(F)$
\item (II), (VII) and (VIII) are the three singularities of
$\PP(1,2,3,5)$
\item (III) and (IV) are the two curves along which $Z(F)$ is
singular
\item (V) is the singular point of the curve (III)
\item (VI) is the singular point of the curve (IV)
\end{itemize}
Furthermore, the curves (III) and (IV) intersect in (V), (VI) and one additional point.
The latter corresponds to strictly semistable sextics,
i.e.~sextics with a triple root.

One also checks that the surface $Z(F)$ does not contain
(II) and (VII), but it contains (VIII) and is smooth there. The
situation is summarized in Figure \ref{fig:sextics}.

\begin{figure}
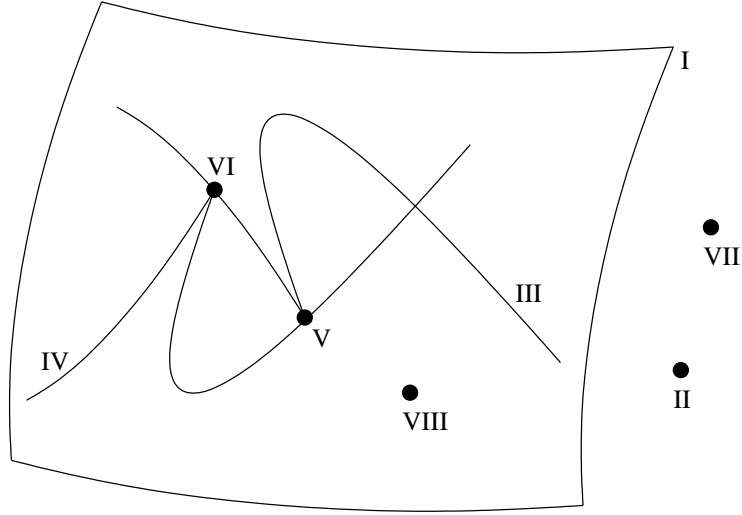

\input bin-sextics.pstex_t
\caption{Inside the moduli space of binary sextics.}\label{fig:sextics}
\end{figure}

\section{Cubic curves and surfaces}\label{sec:cubics}

In this section we describe the relation between the GIT quotient
scheme and the stacky GIT quotient corresponding to cubic plane
curves and cubic surfaces in space.

\subsection{Cubic plane curves}\label{sec:cubiccurves}

The invariant ring $R$ for the action of $\SL(3)$ on cubic forms
in three variables can be written
\begin{equation*}
R = k[I_4, I_6]
\end{equation*}
where the generators $I_i$ are homogeneous degree $i$ polynomials
in the coefficients of the cubic form. Thus the GIT quotient
scheme is $\PP^1$. By Corollary \ref{cor:projrig}, the stacky GIT
quotient $\PProj(R)$ is an essentially trivial $\runity_2$-gerbe
over its rigidification $\PProj(R^{(2)})$, which is the weighted
projective stack
$\stack{P}(2,3)$.
As in Section
\ref{sec:moduliquartics}, this stack is obtained from its
coarse space $\PP^1$ by extracting a square root of $(0:1)$ and a
cube root of $(1:0)$.

\subsection{Cubic surfaces}\label{sec:cubicsurf}

The invariant ring $R$ for the action of $\SL(4)$ on cubic forms
in four variables can be written
\begin{equation*}
R = k[I_8,I_{16},I_{24},I_{32},I_{40},I_{100}]/(I_{100}^2-F(I_8,I_{16},I_{24},I_{32},I_{40}))
\end{equation*}
where the generators $I_i$ are homogeneous of degree $i$ and $F$
is (weighted) homogeneous of degree $200$.

Thus Theorem \ref{thm}
applies: The GIT quotient scheme is the weighted projective
space
\begin{equation*}
X = \Proj(R) = \PP(1,2,3,4,5)
\end{equation*}
and the canonical stack $\can{X}$ is the weighted projective stack 
$\stack{P}(1,2,3,4,5)$.
The stacky GIT quotient
$\stack{X}=\PProj(R)$ is an essentially trivial $\runity_4$-gerbe over its
rigidification $\rig{\stack{X}}$, which is obtained from
$\can{X}$ by extracting a square root of $F$.

One may expect that the singularities of $X$, $Z(F)$, the
singularities of their singular loci etc.,
reflect a classification of cubic surfaces
according to their symmetries, as was the case for binary forms.
We have not investigated this further.


\end{document}